\def\NZQ{\Bbb}               
\def\ZZ{{\NZQ Z}}
\def\frk{\frak}               
\def\pp{{\frk p}}
\def\qq{{\frk q}}
\def\mm{{\frk m}}
\def\Phi{{\frk n}}
\def\Phi{{\frk N}}
\def\opn#1#2{\def#1{\operatorname{#2}}} 
\opn\chara{char} \opn\length{\ell} \opn\pd{pd} \opn\rk{\lk}\opn\link{link}
\opn\projdim{proj\,dim} \opn\injdim{inj\,dim} \opn\rank{rank}\opn\Var{Var}
\opn\depth{depth} \opn\and{and} \opn\grade{grade}
\opn\height{height} \opn\embdim{emb\,dim} \opn\codim{codal}
\opn\Tr{Tr} \opn\bigrank{bigrank}
\opn\superheight{superheight}\opn\lcm{lcm}
\opn\trdeg{trdeg}
\opn\reg{reg} \opn\lreg{lreg} \opn\ini{in} \opn\mod{mod}
\opn\div{div} \opn\Div{Div} \opn\cl{cl} \opn\Cl{Cl}
\opn\Spec{Spec} \opn\Supp{Supp} \opn\supp{supp} \opn\Sing{Sing}
\opn\Ass{Ass} \opn\Min{Min} \opn\Var{Var}
\opn\Ann{Ann} \opn\Rad{Rad} \opn\Soc{Soc} \opn\H{H}
\opn\Im{Im} \opn\Spec{Spec
}
\opn\inf{inf}
 \opn\Ker{Ker} \opn\Coker{Coker} \opn\Am{Am} \opn \inf{inf}
\opn\Hom{Hom} \opn\Tor{Tor} \opn\Ext{Ext} \opn\End{End} \opn\cd{cd}
\opn\Aut{Aut} \opn\id{id}
\opn\nat{nat}
\opn\pff{pf}
\opn\Pf{Pf} \opn\GL{GL} \opn\SL{SL} \opn\mod{mod} \opn\ord{ord}
\opn\cl{cl} \opn\conv{conv} \opn\ext{ext} \opn\rad{rad}
\opn\star{star} \opn\red{red}\opn\H{H} \opn\bight{bight}
\opn\aff{aff} \opn\con{conv} \opn\relint{relint} \opn\st{st}
\opn\lk{lk} \opn\cn{cn} \opn\core{core} \opn\vol{vol}
\opn\link{link} \opn\star{star}  \opn\mdepth{mdepth} \opn\mgrade{mgrade}
\opn\gr{gr}
\def\pot#1#2{#1[\kern-0.28ex[#2]\kern-0.28ex]}
\opn\dirlim{\underrightarrow{\lim}}
\opn\inivlim{\underleftarrow{\lim}}
\let\tensor=\otimes
\let\iso=\cong
\let\Dirsum=\bigoplus
\let\to=\rightarrow
\def\Implies{\ifmmode\Longrightarrow \else
     \unskip${}\Longrightarrow{}$\ignorespaces\fi}
\def\implies{\ifmmode\Rightarrow \else
     \unskip${}\Rightarrow{}$\ignorespaces\fi}
\def\iff{\ifmmode\Longleftrightarrow \else
     \unskip${}\Longleftrightarrow{}$\ignorespaces\fi}
\newtheorem{Theorem}{Theorem}[section]
\newtheorem{Lemma}[Theorem]{Lemma}
\newtheorem{Corollary}[Theorem]{Corollary}
\newtheorem{Proposition}[Theorem]{Proposition}
\newtheorem{Remark}[Theorem]{Remark}
\newtheorem{Example}[Theorem]{Example}
\newtheorem{Definition}[Theorem]{Definition}
\newtheorem{Question}[Theorem]{Question}
\newtheorem{Fact}[Theorem]{Fact}
\let\epsilon\varepsilon
\let\phi=\varphi
\let\kappa=\varkappa
\def\qed{\ifhmode\textqed\fi
   \ifmmode\ifinner\quad\qedsymbol\else\dispqed\fi\fi}
\def\textqed{\unskip\nobreak\penalty50
    \hskip2em\hbox{}\nobreak\hfil\qedsymbol
    \parfillskip=0pt \finalhyphendemerits=0}
\def\dispqed{\rlap{\qquad\qedsymbol}}
\opn\dis{dis}
\def\pnt{{\raise0.5mm\hbox{\large\bf.}}}
\begin{document}

\title{ maximal depth property of bigraded modules}

\author{ Ahad Rahimi}

\subjclass[2010]{13C14, 13C15, 16W50, 13F20, 13D45}.
\keywords{Maximal depth, Sequentially Cohen--Macaulay, Generalized Cohen--Macaulay, Local cohomology, Monomial ideal,   Hypersurface ring.}

\address{ Ahad Rahimi, Department of Mathematics, Razi University, Kermanshah, Iran}\email{ahad.rahimi@razi.ac.ir}

\begin{abstract}
 Let $S=K[x_1, \dots, x_m, y_1, \dots, y_n]$ be the standard
bigraded polynomial ring over a field $K$. Let $M$ be a finitely generated bigraded $S$-module and $Q=(y_1, \dots,  y_n)$.  We say $M$ has maximal depth with respect to $Q$ if there is an associated prime $\pp$ of $M$  such that $\grade(Q, M)=\cd(Q, S/\pp)$. In this paper,  we study finitely generated bigraded modules with maximal depth with respect to $Q$. It is shown that sequentially Cohen--Macaulay modules with respect to $Q$ have maximal depth with respect to $Q$. In fact, maximal depth property generalizes the concept of sequentially Cohen--Macaulayness. Next, we show that if $M$ has maximal depth with respect to $Q$ with $\grade(Q, M)>0$, then $H^{\grade(Q, M)}_{Q}(M)$ is not finitely generated. As a consequence, "generalized Cohen--Macaulay modules with respect to $Q$" having "maximal depth with respect to $Q$" are Cohen--Macaulay with respect to $Q$.  All hypersurface rings that have maximal depth with respect to $Q$ are classified.
\end{abstract}

\maketitle

\section*{Introduction}

Let $K$ be a field and $S=K[x_1, \dots, x_m, y_1, \dots, y_n]$ be the standard
bigraded
 polynomial ring over $K$. In other words, $\deg x_i=(1,0)$ and $\deg y_j=(0, 1)$  for all $i$ and $j$.
 We set the bigraded irrelevant ideals $P=(x_1, \dots, x_m)$ and $Q=(y_1, \dots,  y_n)$. Let $M$ be a
finitely
 generated bigraded $S$-module. The author has been studying the algebraic properties of a finitely generated bigraded $S$-module $M$  with respect to $Q$, see for instance \cite{JR},  \cite{R1},  \cite{R2}.
We denote by $\cd(Q, M)$ the {\em cohomological dimension of $M$ with
respect to $Q$ } which is the largest integer $i$ for which $H^i_ Q
(M)\neq 0$.

A classical fact in commutative algebra (\cite{JR}) says that
\[
\grade(Q, M) \leq \min\{\cd(Q, S/\pp): \pp \in \Ass(M)\}.
\]
 We set $\mgrade(Q, M)=\min\{\cd(Q, S/\pp): \pp \in \Ass(M)\}$. We say $M$ has {\em maximal depth with respect to $Q$} if the equality holds, i.e., $\grade(Q, M) =\mgrade(Q, M).$ In other words, there is an associated prime $\pp$ of $M$  such that $\grade(Q, M) =\cd(Q, R/\pp)$. Some examples
 of modules with maximal depth with respect to $Q$ are given in Example \ref{examples}.  In this paper, the author studies depth property for  finitely generated bigraded modules.

We let $P=0$ and consider $R=K[y_1, \dots, y_n]$ as standard graded polynomial ring. Then, $M$ as ordinary graded $R$-module has maximal depth if $\depth M= \mdepth M$ where $\mdepth M=\min \{ \dim R/\pp: \pp \in \Ass(M) \}$.  This concept has already been working with several authors. Some known results in this regard are as follows: If $I \subseteq R$ is a generic monomial ideal, then it has maximal depth, see \cite[Theorem 2.2]{MSY}. If a monomial ideal $I$ has maximal depth, then so does its polarization, see \cite{FT}. Edge ideals of cycle graphs, transversal polymatroidal ideals and high powers of connected bipartite graph with maximal depth property are classified in \cite{R4}.

The paper is organized as follows: In the preliminary section, we give some facts about $\mgrade(Q, M)$. Let $\mathcal{F}$: $0=M_0\varsubsetneq M_1 \varsubsetneq \dots  \varsubsetneq M_d=M$ be the {\em dimension filtration} of $M$ with respect to $Q$. We observe that all the $M_i$ have the same $\mgrade$ with respect to $Q$, namely this number is $\cd(Q, M_1)$. From this fact, we deduce that if $M$ is {\em sequentially Cohen--Macaulay with respect to $Q$},  then $M$ has maximal depth with respect to $Q$, see Proposition~\ref{mdepth}.  Of course this class is rather large.  In Example \ref{notseq},  the ring $R$ is not sequentially Cohen--Macaulay with respect to $Q$ and has maximal depth with respect to $Q$. If $P=0$, we deduce that the ordinary sequentially Cohen--Macaulay modules have maximal depth, see \cite{R3}.

Let $(R, \mm)$ be a Noetherian local ring and $M$ a finitely generated $R$-module. This is a known fact that if there exists $\pp \in \Ass(M)$ such that $\dim R/\pp=j>0$, then $H^j_{\mm}(M)$
is not finitely generated. Inspired by this fact, we may ask the following question:
\begin{Question}
\label{notfg}
Assume that there exists $\pp \in \Ass(M)$ such that $\cd(Q, S/\pp)=j>0$. Does it follow that $H^j_Q(M)$ is not finitely generated?
\end{Question}

In Section 2, we give a positive answer to this question in a particular case. We first show that if $M$ has maximal depth with respect to $Q$ with $\grade(Q, M)>0$ and $|K|=\infty$, then there exists a
bihomogeneous $M$-regular element $y\in Q$ of degree $(0,1)$ such
that $M/yM$ has maximal depth with respect to $Q$. An example is given to show that the converse is not true in general. This fact is used to answer the above question in the following case:  if $M$ has maximal depth with respect to $Q$ with $\grade(Q, M)>0$, then $H^{\grade(Q, M)}_{Q}(M)$ is not finitely generated, see Theorem~\ref{gradenotfg}.

As a consequence, "generalized Cohen--Macaulay modules with respect to $Q$" with "maximal depth with respect to $Q$" are Cohen--Macaulay with respect to $Q$. In fact, we show: If  $M$ is generalized Cohen--Macaulay with respect to $Q$  with $\grade(Q, M)>0$, then $M$ has maximal depth with respect to $Q$ is equivalent to say that "$M$ is sequentially Cohen--Macaulay with respect to $Q$" and this is the same as "$M$ is Cohen--Macaulay with respect to $Q$". If $P=0$, we deduce that ordinary generalized Cohen--Macaulay modules with maximal depth are Cohen--Macaulay, see \cite{R3}.

In the following section, we let $I\subseteq S$ be a monomial ideal. It is shown that $\mgrade(Q, S/I)=n-d$ where $d$ is the maximal height of an associated prime of $I$ in $Q$. Moreover, if $S/I$ is Cohen--Macaulay, then $S/I$ has maximal depth with respect to $P$ and $Q$. We also show that the maximal depth property is preserved under tensor product and direct sum.

In the final section,  we classify all hypersurface rings that have maximal depth with respect to $Q$.
\section{Preliminaries}

 Let $K$ be a field and $S=K[x_1, \dots, x_m, y_1, \dots, y_n]$ be the standard
bigraded
 polynomial ring over $K$. In other words, $\deg x_i=(1,0)$ and $\deg y_j=(0, 1)$  for all $i$ and $j$.
 We set the bigraded irrelevant ideals
$P=(x_1, \dots, x_m)$ and $Q=(y_1, \dots,  y_n)$. Let $M$ be a
finitely
 generated bigraded $S$-module.
We denote by $\cd(Q, M)$ the {\em cohomological dimension of $M$ with
respect to $Q$ } which is the largest integer $i$ for which $H^i_ Q
(M)\neq 0$. Let $|K|=\infty$. In \cite[Proposition 1.7]{JR} it is shown that
\begin{equation}
 \label{mgrade}
\grade(Q, M) \leq \min\{\cd(Q, S/\pp): \pp \in \Ass(M)\}.
\end{equation}
 We set $\mgrade_S(Q, M)=\min\{\cd(Q, S/\pp): \pp \in \Ass(M)\}$. For simplicity, we write $\mgrade(Q, M)$ instead of $\mgrade_S(Q, M)$.
We recall the following facts which will be used in the sequel.
 \begin{Fact}	
\label{cd}{\em
		The following statements hold.
\begin{itemize}
\item[{(a)}]$\cd(Q,M)  =  \max \{\cd (Q, S/{\pp}): \pp \in \Ass(M)\}$, see \cite[Corollary 4.6]{CJR}.
\item[{(b)}]  $\cd(P, M)=\dim M/QM$  and $\cd(Q, M)=\dim M/PM$,  see \cite[Formula 3]{R1}.
\item[{(c)}] $\grade(Q, M)\leq \dim M-\cd(P, M)$, and the equality holds if $M$ is Cohen--Macaulay, see \cite[Formula 5]{R1}.
\item[{(d)}] $\grade(Q, M)=0$ if and only if there exists $\pp \in \Ass(M)$ such that $Q\subseteq \pp$.
\end{itemize}}
\end{Fact}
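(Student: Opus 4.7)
The four items in Fact~1.1 are collected from the literature with individual citations, so my goal is to indicate, in each case, the standard reduction rather than redo every argument in full. I would handle them in the order (d), (b), (a), (c), since later parts depend on earlier ones. Part (d) is immediate from the characterisation that $\grade(Q,M)=0$ if and only if $Q$ consists of zero-divisors on $M$, combined with finiteness of $\Ass(M)$ and prime avoidance: the containment $Q\subseteq\bigcup_{\pp\in\Ass(M)}\pp$ forces $Q\subseteq\pp$ for a single associated prime.

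For (b), the key observation is that $M/PM$ is finitely generated over $S/P\cong K[y_1,\dots,y_n]$, on which $Q$ is the graded irrelevant ideal; standard facts about local cohomology over a polynomial ring then give $\cd(Q,M/PM)=\dim M/PM$. To lift this equality to $M$ I would use a bihomogeneous prime filtration $0=M_0\subsetneq\cdots\subsetneq M_r=M$ with quotients $(S/\pp_i)(-a_i,-b_i)$, verify $\cd(Q,S/\pp)=\dim S/(\pp+P)$ for each bihomogeneous prime $\pp$ (by reducing modulo $\pp$ and invoking the polynomial-ring case), and assemble the pieces through the long exact sequence in $H^*_Q$; the formula for $\cd(P,M)$ is symmetric.

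Part (a) follows from the same filtration. For $\pp\in\Ass(M)$ one has a bigraded embedding $(S/\pp)(-a,-b)\hookrightarrow M$, which together with (b) yields $\cd(Q,S/\pp)\leq\cd(Q,M)$; conversely, each filtration prime $\pp_i$ lies above some minimal associated prime of $M$, and by (b) together with the monotonicity $\dim S/(\pp+P)\geq\dim S/(\qq+P)$ whenever $\pp\subseteq\qq$, the maximum of $\cd(Q,S/\pp_i)$ across the filtration coincides with the maximum taken only over $\Ass(M)$. Finally, (c) combines the bound $\grade(Q,M)\leq\cd(Q,M)=\dim M/PM$ from (b) with the general estimate $\dim M/PM+\cd(P,M)\leq\dim M$; Cohen--Macaulayness promotes these inequalities to equalities, because on such modules grade coincides with cohomological dimension and the two dimension summands fit into a complementary identity.

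The main technical obstacle is part (a): reducing from all filtration primes to $\Ass(M)$ rests on the monotonicity of $\cd(Q,S/-)$ under prime containment, which is precisely where the formula of (b) becomes indispensable; accordingly I would establish (b) first and only afterwards loop back to (a).
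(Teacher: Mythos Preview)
The paper does not actually prove Fact~1.1: parts (a)--(c) are quoted from the references \cite{CJR} and \cite{R1}, and (d) is stated without argument as a standard fact. So there is no ``paper's own proof'' to compare against, and your sketch is really a self-contained justification. Your treatments of (d), (b) and (a) are sound; in particular, the use of (b) to obtain the monotonicity $\cd(Q,S/\pp)\geq\cd(Q,S/\qq)$ for $\pp\subseteq\qq$, and then to pass from filtration primes to associated primes, is exactly the right mechanism.

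However, your argument for (c) has a genuine gap. You claim a ``general estimate'' $\dim M/PM+\cd(P,M)\leq\dim M$, which by (b) is the same as $\cd(Q,M)+\cd(P,M)\leq\dim M$. This inequality is \emph{false}: take $R=S/(x_1y_1+x_2y_2)$ with $m=n=2$ (the very ring appearing in Example~\ref{regularexample} and Remark~\ref{HP}). Then $\dim R=3$, while $R/PR\cong K[y_1,y_2]$ and $R/QR\cong K[x_1,x_2]$, so $\cd(Q,R)=\cd(P,R)=2$ and $2+2>3$. Chaining $\grade(Q,M)\leq\cd(Q,M)$ with this non-inequality therefore proves nothing.

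The correct route to (c) avoids $\cd(Q,M)$ altogether. If $y_1',\dots,y_r'\in Q$ is a maximal $M$-regular sequence with $r=\grade(Q,M)$, then $\dim M/(y_1',\dots,y_r')M=\dim M-r$, and since $M/QM$ is a further quotient one gets $\dim M/QM\leq\dim M-r$. Combined with (b) this yields $\grade(Q,M)+\cd(P,M)\leq\dim M$. When $M$ is Cohen--Macaulay, the regular sequence extends to a system of parameters and the dimension drop is exact at each stage, forcing equality; this is the content of \cite[Formula~5]{R1}.
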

 Observe that
\[
\grade(Q,  M)\leq \mgrade(Q, M)\leq \cd(Q,M) \leq \dim M.
 \]
Fact \ref{cd}(a) provides the second inequality. Note that $\grade(Q, M)=0$ if and only if $\mgrade(Q, M)=0$. Thus, if $\mgrade(Q, M)=1$, then $\grade(Q, M)=1$.
\begin{Definition}{\em
We say $M$ has {\em maximal depth with respect to $Q$} if the equality (\ref{mgrade}) holds, i.e.,
\[
\grade(Q, M) =\mgrade(Q, M).
\]
In other words, there is an associated prime $\pp$ of $M$  such that $\grade(Q, M)=\cd(Q, S/\pp)$. }
\end{Definition}

\begin{Example}	
\label{examples}{\em Some examples of modules with maximal depth property are as follows:
\begin{itemize}
\item Let $q\in \ZZ$. In \cite{R1}, we say $M$ is Cohen--Macaulay with respect to $Q$ if we have only one non-vanishing local cohomology. In other words, $\grade(Q, M) = \cd(Q, M) = q$.  Cohen--Macaulay modules with respect to $Q$ have maximal depth with respect to $Q$ because $\grade(Q, M)=\cd(Q, S/\pp)$ for every associated prime $\pp$ of $M$.
\item If $\cd(Q, M)\leq 1$, then $M$ has maximal depth with respect to $Q$.
\item If $\grade(Q, M)=0$, then $M$ has maximal depth with respect to $Q$. In fact, Fact \ref{cd}(d) provides an associated prime $\pp$ of $M$ such that $Q\subseteq \pp$. Hence   $\cd(Q, S/\pp)=\dim S/(P+\pp)=\dim S/(P+Q)=0$. The first equality follows from Fact \ref{cd}(b). Therefore, $M$ has maximal depth with respect to $Q$.
\end{itemize} }
\end{Example}

A finite filtration $\mathcal{D}$: $0=D_0\varsubsetneq D_1
\varsubsetneq
 \dots  \varsubsetneq  D_r=M$ of bigraded submodules of $M$ is the
dimension filtration of $M$ with respect to $Q$ if $D_{i-1}$ is the
largest bigraded submodule of $D_i$ for which $\cd(Q,D_{i-1})<\cd(Q, D_i)$
for all $i=1, \dots, r$.
\begin{Fact}
\label{PR}{\em
let  $\mathcal{D}$ be the dimension filtration of $M$ with respect to $Q$.  Then
\begin{itemize}
\item[{(a)}]$
\Ass(D_i)=\{ \pp\in \Ass(M): \cd(Q, S/\pp)\leq \cd(Q, D_i)\}$, see \cite[Lemma 1.7]{PR}.
\item[{(b)}]$
\Ass(M/D_i)=\Ass(M)- \Ass(D_{i})$, see \cite[Corollary 1.10]{PR}.
\end{itemize} }
\end{Fact}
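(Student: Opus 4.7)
\medskip\noindent\textbf{Proof plan.} My first step is to reformulate the dimension filtration: I claim that $D_i$ is in fact the largest bigraded submodule of $M$ with $\cd(Q,\cdot)\leq c_i$, where $c_i:=\cd(Q,D_i)$. The key ingredient is the closure of this class under finite sums: from the short exact sequence $0\to L_1\cap L_2\to L_1\oplus L_2\to L_1+L_2\to 0$ and the submodule monotonicity $\cd(Q,L_1\cap L_2)\leq\min_j\cd(Q,L_j)$ (a direct consequence of Fact~\ref{cd}(a) applied via $\Ass(L_1\cap L_2)\subseteq\Ass(L_j)$), the long exact sequence in local cohomology yields $\cd(Q,L_1+L_2)\leq\max_j\cd(Q,L_j)$. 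A downward induction from $D_r=M$, using the definition that $D_{i-1}$ is the largest bigraded submodule of $D_i$ with strictly smaller $\cd$, then upgrades this to the global maximality statement inside $M$.

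For part (a), the inclusion ``$\subseteq$'' is immediate from Fact~\ref{cd}(a) applied to $D_i$. For ``$\supseteq$,'' take $\pp\in\Ass(M)$ with $\cd(Q,S/\pp)\leq c_i$ and, since $\pp$ is bigraded, pick a bihomogeneous $x\in M$ with $\Ann(x)=\pp$. Then $Sx\cong S/\pp$ has $\cd(Q,Sx)\leq c_i$, so $L:=D_i+Sx$ also satisfies $\cd(Q,L)\leq c_i$ by the closure property, and the global maximality of $D_i$ forces $L=D_i$. Hence $Sx\subseteq D_i$ and $\pp\in\Ass(D_i)$.

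For part (b), the inclusion $\Ass(M)\setminus\Ass(D_i)\subseteq\Ass(M/D_i)$ comes from the standard $\Ass(M)\subseteq\Ass(D_i)\cup\Ass(M/D_i)$ extracted from $0\to D_i\to M\to M/D_i\to 0$. For the reverse, take $\pp\in\Ass(M/D_i)$ and pick a bihomogeneous $y\in M\setminus D_i$ whose residue class has annihilator $\pp$; set $L:=D_i+Sy$, so that $L/D_i\cong S/\pp$ and $\Ass(L)\subseteq\Ass(D_i)\cup\{\pp\}$ by the short exact sequence $0\to D_i\to L\to L/D_i\to 0$. If either $\pp\in\Ass(D_i)$ or $\pp\notin\Ass(L)$, then in both cases $\Ass(L)\subseteq\Ass(D_i)$, whence Fact~\ref{cd}(a) gives $\cd(Q,L)\leq c_i$ and the maximality of $D_i$ forces $L=D_i$, contradicting $y\notin D_i$. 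Therefore $\pp\notin\Ass(D_i)$ and $\pp\in\Ass(L)\subseteq\Ass(M)$, as desired.

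The main obstacle is the preliminary global characterization of $D_i$: the dimension filtration is defined ``locally'' (each $D_{i-1}$ as the largest submodule of $D_i$ with strictly smaller $\cd$) rather than globally, and upgrading this to a maximality statement inside $M$ is precisely what makes the uniform ``$L=D_i+S\cdot(\text{test element})$'' device drive both parts through a single contradiction with maximality. Once this reformulation is in hand, bihomogeneity poses no difficulty, since associated primes of bigraded $S$-modules are themselves bigraded and admit bihomogeneous cyclic representatives.
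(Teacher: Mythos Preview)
The paper does not prove this statement at all: it records it as a Fact and simply cites \cite[Lemma~1.7 and Corollary~1.10]{PR}. Your proposal, by contrast, supplies a complete self-contained argument, and it is correct. The pivotal step---showing that $D_i$ is globally maximal in $M$ among bigraded submodules with $\cd(Q,\cdot)\leq c_i$, via the closure of this class under finite sums---is exactly the right reformulation, and your use of Fact~\ref{cd}(a) together with $\Ass(L_1\cap L_2)\subseteq\Ass(L_j)$ to control $\cd$ along the short exact sequence $0\to L_1\cap L_2\to L_1\oplus L_2\to L_1+L_2\to 0$ is clean. Once that global maximality is in place, both (a) and (b) follow from the single ``add a cyclic test module and contradict maximality'' maneuver, as you outline. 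So where the paper defers to the literature, you have given a genuine proof; the approach you take is also the standard one (and essentially that of the cited reference, adapted to the $\cd(Q,-)$ setting).
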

\begin{Lemma}
\label{assd}
Let  $\mathcal{D}$ be the dimension filtration of $M$ with respect to $Q$. Then
\[
\Ass(D_i/D_{i-1})=\{ \pp \in \Ass(M): \cd(Q, S/\pp)=\cd(Q, D_i)\}.
\]
In particular,
\begin{equation}
 \label{assu}	
\Ass(M) =\bigcup_{i=1}^r\Ass(D_i/D_{i-1}).
\end{equation}
\end{Lemma}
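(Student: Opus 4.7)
The plan is to read the associated primes of $D_i/D_{i-1}$ off the short exact sequence
\[
0 \to D_{i-1} \to D_i \to D_i/D_{i-1} \to 0,
\]
by comparing with Fact~\ref{PR}(a). The elementary inclusion $\Ass(D_i) \subseteq \Ass(D_{i-1}) \cup \Ass(D_i/D_{i-1})$ immediately gives $\Ass(D_i) \setminus \Ass(D_{i-1}) \subseteq \Ass(D_i/D_{i-1})$, and Fact~\ref{PR}(a) applied to both $D_{i-1}$ and $D_i$ identifies the left-hand side with $\{\pp \in \Ass(M) : \cd(Q, D_{i-1}) < \cd(Q, S/\pp) \leq \cd(Q, D_i)\}$. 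The first real task is to recognize that this interval collapses to $\{\pp \in \Ass(M) : \cd(Q, S/\pp) = \cd(Q, D_i)\}$: no associated prime of $M$ may sit strictly between. This uses the defining maximality of $D_{i-1}$: given such an intermediate $\pp$ with witness $x \in D_i$, the submodule $D_{i-1} + Sx$ strictly contains $D_{i-1}$, while the short exact sequence $0 \to D_{i-1} \to D_{i-1} + Sx \to S/\qq \to 0$ with $\qq \supseteq \pp$ shows $\cd(Q, D_{i-1} + Sx) < \cd(Q, D_i)$ (invoking Fact~\ref{cd}(b) to get $\cd(Q, S/\qq) \leq \cd(Q, S/\pp) < \cd(Q, D_i)$), contradicting the definition of the dimension filtration.

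For the reverse inclusion, I would combine two observations about $\pp \in \Ass(D_i/D_{i-1})$. Embedding $D_i/D_{i-1}$ into $M/D_{i-1}$ and applying Fact~\ref{PR}(b) gives $\pp \in \Ass(M) \setminus \Ass(D_{i-1})$, so $\cd(Q, S/\pp) > \cd(Q, D_{i-1})$ by Fact~\ref{PR}(a). On the other hand, the long exact sequence of local cohomology attached to the displayed SES, together with the hypothesis $\cd(Q, D_{i-1}) < \cd(Q, D_i)$, forces $\cd(Q, D_i/D_{i-1}) = \cd(Q, D_i)$, after which Fact~\ref{cd}(a) applied to the quotient bounds $\cd(Q, S/\pp) \leq \cd(Q, D_i)$. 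Coupled with the ``no intermediate value'' step from the previous paragraph, both bounds collapse to $\cd(Q, S/\pp) = \cd(Q, D_i)$, completing the main equality.

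The ``in particular'' statement then follows quickly. Given $\pp \in \Ass(M)$, let $i$ be the smallest index for which $\pp \in \Ass(D_i)$; this $i$ exists because $\Ass(D_0) = \emptyset$ and $\Ass(D_r) = \Ass(M)$, and then $\pp \in \Ass(D_i) \setminus \Ass(D_{i-1}) \subseteq \Ass(D_i/D_{i-1})$ by the first step. Conversely, $\Ass(D_i/D_{i-1}) \subseteq \Ass(M/D_{i-1}) = \Ass(M) \setminus \Ass(D_{i-1}) \subseteq \Ass(M)$ by Fact~\ref{PR}(b), so the union of the $\Ass(D_i/D_{i-1})$ exhausts $\Ass(M)$. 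The main obstacle I anticipate is the maximality argument ruling out intermediate associated primes; the cohomological bookkeeping identifying $\cd(Q, D_i/D_{i-1})$ is routine but must be set up carefully.
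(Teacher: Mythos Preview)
Your proposal is correct and follows essentially the same route as the paper: both arguments hinge on Fact~\ref{PR}(a,b), the embedding $D_i/D_{i-1}\hookrightarrow M/D_{i-1}$, the inclusion $\Ass(D_i)\subseteq\Ass(D_{i-1})\cup\Ass(D_i/D_{i-1})$, and Fact~\ref{cd}(a) to bound $\cd(Q,S/\pp)$ above by $\cd(Q,D_i/D_{i-1})=\cd(Q,D_i)$. The only substantive difference is that where the paper passes from the sandwich $\cd(Q,D_{i-1})<\cd(Q,S/\pp)\leq\cd(Q,D_i)$ to the equality $\cd(Q,S/\pp)=\cd(Q,D_i)$ in one line (implicitly relying on the structure of the dimension filtration established in \cite{PR}), you spell out the maximality argument explicitly via $D_{i-1}+Sx$; this is a genuine strengthening of the exposition rather than a different method.
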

\begin{proof}
We set $A=\{ \pp \in \Ass(M): \cd(Q, S/\pp)=\cd(Q, D_i)\}$. Let $\pp \in \Ass(D_i/D_{i-1})$. By Fact \ref{cd}(a) we have $\cd(Q, S/\pp)\leq \cd(Q, D_i/D_{i-1})=\cd(Q, D_i)$. The embedding $0 \to D_i/D_{i-1} \to M/D_{i-1}$ also yields $\pp \in \Ass(M/D_{i-1})$. Hence $\pp \in \Ass(M)$ and $\pp \not \in \Ass(D_{i-1})$ by Fact \ref{PR}(b).
Consequently, $\cd(Q, D_{i-1})<\cd(Q, S/\pp)\leq \cd(Q, D_i)$. Fact \ref{PR}(a) provides the first inequality. It follows that $\cd(Q, S/\pp)=\cd(Q, D_i)$ and hence $\pp \in A$. Now let $\pp \in A$. Thus $\cd(Q, S/\pp)=\cd(Q, D_i)$ and so $\pp \in \Ass(D_i)$ by Fact \ref{PR}(a). As $\pp \not \in \Ass(D_{i-1})$, the containment $\Ass(D_i)\subseteq \Ass(D_{i-1})\cup \Ass(D_i/D_{i-1})$ implies $\pp \in \Ass(D_i/D_{i-1})$.
\end{proof}
A finite filtration $\mathcal{F}$:
$0=M_0\varsubsetneq M_1 \varsubsetneq
 \dots  \varsubsetneq M_r=M$
 of $M$ by
bigraded submodules $M$ is called a {\em Cohen--Macaulay filtration with
respect to $Q$} if each quotient $M_i/M_{i-1}$ is Cohen--Macaulay with respect to $Q$ and
$0 \leq \cd(Q, M_1/M_0)<\cd(Q, M_2/M_1)< \dots< \cd(Q, M_r/M_{r-1})$. If $M$ admits a Cohen--Macaulay filtration with respect to $Q$, then we say $M$ is {\em sequentially Cohen--Macaulay with respect to $Q$}. Note that if $M$ is sequentially Cohen--Macaulay with respect to $Q$, then the filtration $\mathcal{F}$ is uniquely determined and it is just the dimension filtration of $M$ with respect to $Q$, that is, $\mathcal{F}=\mathcal{D}$, see \cite{R2}.

\begin{Proposition}
\label{mdepth}
Let  $\mathcal{F}$: $0=M_0\varsubsetneq M_1 \varsubsetneq \dots \varsubsetneq M_d=M$ be the dimension filtration of $M$ with respect to $Q$.
 Then, $\mgrade(Q, M_i)=\cd(Q, M_1)$ for $i=1, \dots, d$.  Moreover, if $M$ is sequentially Cohen--Macaulay with respect to $Q$, then $M$ has maximal depth with respect to $Q$.
\end{Proposition}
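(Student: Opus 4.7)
My plan is to prove the two assertions essentially independently. For the first, I would fix $i$ and analyze $\Ass(M_i)$ via Fact~\ref{PR}(a), which gives
\[
\Ass(M_i)=\{\pp\in\Ass(M): \cd(Q,S/\pp)\le \cd(Q,M_i)\}.
\]
The minimum of $\cd(Q,S/\pp)$ over $\pp\in\Ass(M_i)$ is then what I need to compute. The key observation is that, by Lemma~\ref{assd} (equation (\ref{assu})), every $\pp\in\Ass(M)$ belongs to some $\Ass(M_j/M_{j-1})$, and hence satisfies $\cd(Q,S/\pp)=\cd(Q,M_j)$ for some $j$; since $\cd(Q,M_1)<\cd(Q,M_2)<\cdots<\cd(Q,M_d)$, this forces $\cd(Q,S/\pp)\ge \cd(Q,M_1)$. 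Because $\Ass(M_1)\subseteq \Ass(M_i)$ and every $\pp\in\Ass(M_1)=\Ass(M_1/M_0)$ has $\cd(Q,S/\pp)=\cd(Q,M_1)$, this lower bound is attained. Hence $\mgrade(Q,M_i)=\cd(Q,M_1)$.

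For the second assertion, assume $M$ is sequentially Cohen--Macaulay with respect to $Q$. Then the dimension filtration $\mathcal{F}=\mathcal{D}$ is also the Cohen--Macaulay filtration, so each factor $M_i/M_{i-1}$ is Cohen--Macaulay with respect to $Q$ with $\cd(Q,M_i/M_{i-1})=\cd(Q,M_i)$. I would prove by induction on $i$ that
\[
H^j_Q(M_i)=0 \text{ for } j<\cd(Q,M_1), \qquad H^{\cd(Q,M_1)}_Q(M_i)\neq 0.
\]
The base case $i=1$ is immediate, since $M_1$ is Cohen--Macaulay with respect to $Q$ of cohomological dimension $\cd(Q,M_1)$. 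For the inductive step, I would feed the short exact sequence $0\to M_{i-1}\to M_i\to M_i/M_{i-1}\to 0$ into the local cohomology long exact sequence; in the range $j\le \cd(Q,M_1)<\cd(Q,M_i)$, the Cohen--Macaulayness of $M_i/M_{i-1}$ makes both $H^{j-1}_Q(M_i/M_{i-1})$ and $H^{j}_Q(M_i/M_{i-1})$ vanish, so $H^j_Q(M_i)\cong H^j_Q(M_{i-1})$ and the inductive hypothesis transfers directly. Taking $i=d$ gives $\grade(Q,M)=\cd(Q,M_1)$, which combined with the first part yields $\grade(Q,M)=\mgrade(Q,M)$.

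The only genuinely delicate point is the vanishing of $H^{j-1}_Q(M_i/M_{i-1})$ in the induction step, which requires the strict inequality $\cd(Q,M_1)<\cd(Q,M_i)$ for $i\ge 2$; but this is built into the definition of the dimension filtration, so no obstacle arises. Everything else is either bookkeeping with the long exact sequence or a direct appeal to the facts already assembled (Fact~\ref{PR}(a), Lemma~\ref{assd}, and the definition of Cohen--Macaulayness with respect to $Q$).
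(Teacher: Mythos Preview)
Your proof is correct. For the first assertion, you and the paper use the same ingredients (Fact~\ref{PR}(a) and Lemma~\ref{assd}); the paper first shows $\mgrade(Q,M)=\cd(Q,M_1)$ and then squeezes all the $\mgrade(Q,M_i)$ via the chain
\[
\mgrade(Q,M)\le \mgrade(Q,M_{d-1})\le\cdots\le \mgrade(Q,M_1)\le \cd(Q,M_1),
\]
whereas you compute each $\mgrade(Q,M_i)$ directly. These are cosmetic rearrangements of the same argument.

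For the second assertion your route is genuinely different. The paper does not compute $\grade(Q,M)$ itself; instead it invokes \cite[Fact~2.3]{R2}, which says that for a sequentially Cohen--Macaulay module one has $\grade(Q,M_i)=\grade(Q,M)$ for all $i$, and then combines this with the already-established $\grade(Q,M_1)=\mgrade(Q,M_1)=\cd(Q,M_1)=\mgrade(Q,M)$. You instead give a self-contained induction on the filtration, using the local cohomology long exact sequence and the vanishing $H^j_Q(M_i/M_{i-1})=0$ for $j<\cd(Q,M_i)$ to propagate the isomorphisms $H^j_Q(M_i)\cong H^j_Q(M_{i-1})$ in the range $j\le \cd(Q,M_1)$. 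Your argument is more elementary in that it avoids the external citation and makes the mechanism transparent; the paper's version is shorter once that citation is granted. Either way, the outcome $\grade(Q,M)=\cd(Q,M_1)=\mgrade(Q,M)$ is the same.
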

\begin{proof}
We first show that $\mgrade(Q, M)=\cd(Q, M_1)$. We set $\mgrade(Q, M)=t$. Thus there exists $\pp \in \Ass(M)$ such that $\cd(Q, S/\pp)=t$. Hence  $\pp \in \Ass(M_i/M_{i-1})$ for some $i$ by (\ref{assu}). Thus $\cd(Q, S/\pp)=\cd(Q, M_i)=t$  again by (\ref{assu}). Note that $t=\mgrade(Q, M)\leq \cd(Q, M_1)$. If $t<\cd(Q, M_1)$, then $\cd(Q, M_i)<\cd(Q, M_1)$ for some $i$, a contradiction. Therefore, $\mgrade(Q, M)=\cd(Q, M_1)$.
Now we observe that
\[
t=\mgrade(Q, M)\leq \mgrade(Q, M_{d-1})\leq \dots \leq \mgrade(Q,  M_1)\leq \cd(Q, M_1)=t.
\]
Consequently,  $\mgrade(Q,  M_i)=t$ for $i=1, \dots, d$.

To show the second part, let $M$ be sequentially Cohen--Macaulay with respect to $Q$. Thus the dimension filtration of  $\mathcal{F}$ with respect to $Q$ is the Cohen--Macaulay filtration with respect to $Q$. As $M_1$ is Cohen--Macaulay with respect to $Q$, it has maximal depth with respect to $Q$ and so $\grade(Q, M_1)=\mgrade(Q, M_1)$. Since $M$ is sequentially Cohen--Macaulay with respect to $Q$, it follows that $\grade(Q,  M_i)=\grade(Q, M)$ for all $i$, see \cite[Fact 2.3]{R2}. Using the first part, we have
\[
\grade(Q, M)=\grade(Q, M_1)=\mgrade(Q, M_1)=\mgrade(Q, M),
\]
 as desired.
\end{proof}

In the following, we give an example which is not sequentially Cohen--Macaulay with respect to $Q$ and has maximal depth with respect to $Q$.
\begin{Example}
\label{notseq}{\em
Let $S=K[x_1, x_2, y_1, y_2, y_3, y_4]$ be the standard bigraded polynomial ring. We set $R=S/I$ where $I=( x_1x_2, x_1y_3, x_1 y_4, x_2y_1, y_1y_3, y_1y_4, y_2y_4, y_2y_3)$ and $Q=(y_1, y_2, y_3, y_4)$. By using CoCoA(\cite{Co}), the ideal $I$ has the minimal primary decomposition $I=\bigcap_{i=1}^{3}\pp_i$ where
$\pp_1=(x_1, y_1, y_2), \pp_2=(x_2, y_3, y_4)$ and $\pp_3=(x_1, y_1, y_3,y_4)$. Fact \ref{cd}(a) provides $\mgrade(Q, R)=1$. Thus $\grade(Q, R)=1$ and so $R$ has maximal depth with respect to $Q$. On the other hand, $R$ is not sequentially  Cohen--Macaulay with respect to $Q$, see \cite[Example 2.15]{NR1}.
}
\end{Example}

\section{Not finitely generated local cohomology modules}

Let $(R, \mm)$ be a Noetherian local ring and $M$ a finitely generated $R$-module. This is a known fact that if there exists $\pp \in \Ass(M)$ such that $\dim R/\pp=j>0$, then $H^j_{\mm}(M)$
is not finitely generated, see \cite[Corollary 11.3.3]{BS} and \cite[Exercise 11.3.9]{BS}. Inspired by this fact, we may ask the following question:
\begin{Question}
\label{notfg}
Assume that there exists $\pp \in \Ass(M)$ such that $\cd(Q, S/\pp)=j>0$. Does it follow that $H^j_Q(M)$ is not finitely generated?
\end{Question}

In this section, we have a positive answer for this question in a particular case. First, we prove the following crucial lemma:
\begin{Lemma}
\label{regular} Suppose $\grade(Q, M)>0$  and  $|K|=\infty$. If $M$ has maximal depth with respect to $Q$, then there exists a
bihomogeneous $M$-regular element $y\in Q$ of degree $(0,1)$ such
that $M/yM$ has maximal depth with respect to $Q$.
\end{Lemma}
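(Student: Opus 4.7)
Set $t:=\grade(Q,M)=\mgrade(Q,M)$, which is positive by hypothesis, and fix $\pp\in\Ass(M)$ with $\cd(Q,S/\pp)=t$, as provided by the maximal depth assumption. The plan is to pick a bihomogeneous element $y$ of degree $(0,1)$ in $Q$ that is simultaneously $M$-regular and generic enough to force $\dim S/(P+\pp+yS)=t-1$; once this is done, I will extract an associated prime of $M/yM$ with $\cd$-value exactly $t-1=\grade(Q,M/yM)$ out of the cyclic submodule generated by the image of an element with annihilator $\pp$.

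For the choice of $y$, I will apply graded prime avoidance to the $K$-subspace $Ky_1+\cdots+Ky_n$ (this is where $|K|=\infty$ is used) to avoid the finite union of $\Ass(M)$ together with the set of those minimal primes of $P+\pp$ whose residue ring has dimension $t$. The crucial verification is that no prime in either list contains $Q$: for $\pp'\in\Ass(M)$, this combines $\grade(Q,M)>0$ with Fact \ref{cd}(d); for a minimal prime $\qq$ of $P+\pp$ of dimension $t$, note that $\qq$ is bihomogeneous, and any bihomogeneous prime containing $P+Q$ equals the irrelevant maximal ideal, whose residue ring has dimension $0\neq t$. The resulting $y$ is $M$-regular, and Krull's principal ideal theorem combined with avoidance of the top-dimensional minimal primes of $P+\pp$ gives $\dim S/(P+\pp+yS)=t-1$, hence $\cd(Q,S/(\pp+yS))=t-1$ via Fact \ref{cd}(b).

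To produce the associated prime, I pick bihomogeneous $m\in M$ with $\Ann_S(m)=\pp$. If $m=ym_1$ for some $m_1\in M$, a direct check using $M$-regularity of $y$ and $\pp m=0$ shows $\Ann_S(m_1)=\pp$, while the $y$-degree of $m_1$ is strictly smaller than that of $m$. Iteration cannot continue indefinitely because bidegrees of a finitely generated bigraded $S$-module are bounded below, so after relabelling I may assume $m\notin yM$; then $\bar m\in M/yM$ is nonzero and $I:=\Ann_S(\bar m)\supseteq\pp+(y)$ is a proper ideal. The surjection $S/(\pp+yS)\twoheadrightarrow S/I$ together with Fact \ref{cd}(b) gives $\cd(Q,S/I)\leq t-1$; conversely, $S/I\cong S\bar m$ embeds in $M/yM$, so $\Ass(S/I)\subseteq\Ass(M/yM)$, and every such associated prime has $\cd$-value at least $\grade(Q,M/yM)=t-1$, which via Fact \ref{cd}(a) forces $\cd(Q,S/I)\geq t-1$. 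Thus $\cd(Q,S/I)=t-1$, and a final application of Fact \ref{cd}(a) produces $\Pp\in\Ass(S/I)\subseteq\Ass(M/yM)$ with $\cd(Q,S/\Pp)=t-1=\grade(Q,M/yM)$, which is exactly the maximal depth property for $M/yM$.

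The hardest step will be the joint prime-avoidance that simultaneously secures $M$-regularity of $y$ and the dimension drop $t\to t-1$ along $P+\pp$; coupled with the bigraded-finiteness argument that replaces $m$ by a representative outside $yM$, these are the non-formal inputs, after which the sandwich bound on $\cd(Q,S/I)$ that pins down an associated prime of the right cohomological dimension is essentially automatic.
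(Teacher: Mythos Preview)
Your proof is correct and follows essentially the same strategy as the paper: choose $y\in Q_{(0,1)}$ by prime avoidance so that it is $M$-regular and forces the dimension of $S/(P+\pp)$ to drop by one, then exhibit an associated prime of $M/yM$ with $\cd$-value exactly $\grade(Q,M)-1$. The only difference is expository: the paper simply asserts that $\pp$ is properly contained in some $\qq\in\Ass(M/yM)$ and runs the sandwich inequalities directly on $\qq$, whereas you unpack this by constructing $\bar m$, working with $I=\Ann_S(\bar m)$, and extracting $\Pp\in\Ass(S/I)\subseteq\Ass(M/yM)$---this is precisely a proof of the fact the paper takes for granted.
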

\begin{proof}
Here we follow the proof of \cite[Proposition 1.7]{JR}. By our assumption, there exists $\pp \in \Ass(M)$ such that $\grade(Q, M)=\cd(Q, S/\pp)$. As  $\grade(Q,
M)>0$, there exists a bihomogeneous
$M$-regular element $y\in Q$  such that $\grade(Q, M/yM)=\grade(Q, M)-1$.  The element $\pp \in \Ass(M)$ is properly contain in an element $\qq \in \Ass (M/yM)$. The  element $y$ may be chosen to avoid
all the minimal prime ideal of $\Supp(S/(P+\pp))$, too.
Observe that
\begin{eqnarray*}
\grade(Q, M)-1 & = & \grade(Q, M/yM) \\
          & \leq & \cd(Q, S/\qq) \\
          &=& \dim S/(P+\qq)\\
          &<&  \dim S/(P+\pp)\\
          &=& \cd(Q, S/\pp)\\
          & =&  \grade(Q, M).
\end{eqnarray*}
Consequently,
 \begin{equation}
 \label{homogen}
\grade(Q, M/yM) = \cd(Q, S/\qq) \quad\text{where} \quad  \qq \in \Ass(M/yM).
\end{equation}
Therefore, $M/yM$ has maximal depth with respect to $Q$.
\end{proof}
The following example shows that the converse of Lemma \ref{regular} is not true in general.
\begin{Example}
\label{regularexample}{\em
Let $S=K[x_1,x_2,y_1, y_2]$ be the standard bigraded polynomial ring.
   We set  $R=S/(f)$ where $f=x_1y_1+x_2y_2$ and $Q=(y_1, y_2)$. The ring $R$ is Cohen--Macaulay of dimension $3$ and  $\grade(Q, R)=\dim R-\cd(P, R)=3-2=1>0$ by Fact \ref{cd}(c). The bihomogenous element $y=y_1+y_2\in S$ is $R$-regular. Consider the following isomorphism
 \[
 R/yR\iso S/(f, y).
 \]
 Using Macaulay2 (\cite{GSE}) gives us the associated primes of $T=S/(f, y)$ that is
 \[
 \{ (y_1, y_2), (y_1+y_2, x_1-x_2)\}.
 \]
Since $T$ has an associated prime contains $Q$, it follows from Fact \ref{cd}(d) that
\[
\grade(Q, T)=\grade(Q, R/yR)=0.
 \]
 Therefore, $R/yR$ has maximal depth with respect to $Q$.  On the other hand,  as $\Ass(R)=\{(f)\}$ we have
 \[
 \mgrade(Q, R)=\cd(Q, R)=\dim R/PR=\dim S/P=2.
 \]
Fact \ref{cd}(b) explains the second equality. As $\grade(Q, R)=1$,  the ring $R$ has no maximal depth with respect to $Q$. }
\end{Example}
\begin{Theorem}
\label{gradenotfg}
Assume $M$ has maximal depth with respect to $Q$ with $\grade(Q, M)>0$ and $|K|=\infty$. Then $H^{\grade(Q, M)}_{Q}(M)$ is not finitely generated.
\end{Theorem}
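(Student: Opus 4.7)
The plan is to proceed by induction on $t := \grade(Q, M)$, with Lemma~\ref{regular} handling the inductive step and a separate argument needed for the base case $t = 1$. For the inductive step $t \geq 2$, I would apply Lemma~\ref{regular} to obtain a bihomogeneous $M$-regular element $y \in Q$ of degree $(0,1)$ such that $M/yM$ has maximal depth with respect to $Q$; the lemma's proof also records $\grade(Q, M/yM) = t - 1 > 0$, so the inductive hypothesis applies to $M/yM$. From $0 \to M \xrightarrow{y} M \to M/yM \to 0$ together with the vanishing $H^{t-1}_Q(M) = 0$, the local cohomology long exact sequence identifies $(0 :_{H^t_Q(M)} y)$ with $H^{t-1}_Q(M/yM)$. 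The latter is not finitely generated by induction, so $H^t_Q(M)$ contains a non-finitely-generated submodule and therefore is not finitely generated.

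The delicate case is the base $t = 1$, because $H^0_Q(M/yM)$ is always a submodule of the finitely generated module $M/yM$, hence itself finitely generated, so the trick above collapses. My plan is first to reduce to the case $M = S/\pp$: pick $\pp \in \Ass(M)$ with $\cd(Q, S/\pp) = 1$, choose a bihomogeneous embedding $S/\pp \hookrightarrow M$ (up to a bigraded shift), and apply the long exact sequence of $0 \to S/\pp \to M \to N \to 0$ with $N := M/(S/\pp)$. Since $H^0_Q(M) = 0$ and $H^0_Q(N) \subseteq N$ is finitely generated, the quotient $H^1_Q(S/\pp)/H^0_Q(N)$ injects into $H^1_Q(M)$; it fails to be finitely generated whenever $H^1_Q(S/\pp)$ does, so it suffices to treat $R := S/\pp$.

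For $R = S/\pp$ with $\cd(Q, R) = 1$: it is a domain, and since $Q \not\subseteq \pp$ (otherwise Fact~\ref{cd}(d) would give $\cd(Q, R) = 0$) we have $\grade(Q, R) = \cd(Q, R) = 1$, so $R$ is Cohen--Macaulay with respect to $Q$. By Fact~\ref{cd}(b), $\dim R/PR = 1$, so using $|K| = \infty$ and graded prime avoidance on the finitely many one-dimensional minimal primes of $R/PR$, I can choose a bihomogeneous $y \in Q$ of degree $(0,1)$ such that $y \notin \pp$ and $\dim R/(PR + yR) = 0$. The last condition says $\cd(Q, R/yR) = 0$ via Fact~\ref{cd}(b), so $H^1_Q(R/yR) = 0$, and the long exact sequence of $0 \to R \xrightarrow{y} R \to R/yR \to 0$ shows that multiplication by $y$ is surjective on $H^1_Q(R)$. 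Since $H^1_Q(R)$ is always $Q$-torsion, if it were finitely generated it would be annihilated by some $y^n$; combined with surjectivity this forces $H^1_Q(R) = y^n H^1_Q(R) = 0$, contradicting $\cd(Q, R) = 1$.

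The step I expect to be the main obstacle is the parameter selection in the base case---verifying that a generic linear form in the $y_j$ is simultaneously $R$-regular and reduces $R/PR$ from dimension one to dimension zero. One has to check that each one-dimensional minimal prime of $R/PR$ meets the degree-$(0,1)$ piece $Q_{(0,1)}$ in a proper $K$-subspace (otherwise it would swallow all of $Q$ and be zero-dimensional), and then that $y \notin \pp$ is automatic since any $y \in \pp$ would map to zero in $R/PR$ and hence lie in every minimal prime. Once this genericity step is in place, the ``surjective plus nilpotent equals zero'' dichotomy closes the base case cleanly.
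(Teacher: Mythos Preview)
Your proof is correct and shares the paper's overall architecture: induction on $t=\grade(Q,M)$, with the inductive step $t\geq 2$ handled exactly as in the paper via Lemma~\ref{regular} and the long exact sequence of $0\to M\xrightarrow{y}M\to M/yM\to 0$, and the base case $t=1$ reduced to $R=S/\pp$ through an embedding $S/\pp\hookrightarrow M$ and the attendant four-term sequence.

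The only substantive difference is how the base case is closed. The paper does not argue directly: it invokes as a ``known fact'' that $H^{\cd(Q,N)}_Q(N)$ is never finitely generated for a finitely generated module $N$ (with $\cd(Q,N)>0$), applied to $N=S/\pp$. You instead prove this for $S/\pp$ by a self-contained Nakayama-type argument---choose $y\in Q_{(0,1)}$ generic enough that multiplication by $y$ is surjective on $H^1_Q(R)$, then combine surjectivity with $Q$-torsion to force $H^1_Q(R)=0$ if it were finitely generated. Your prime-avoidance verification (each one-dimensional minimal prime of $R/PR$ meets $Q_{(0,1)}$ properly, and $y\notin\pp$ is automatic) is sound. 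The upshot: your base case is longer but fully self-contained, while the paper's is a one-line citation; otherwise the two proofs coincide.
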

\begin{proof}
We set $\grade(Q, M)=r$. We proceed by induction on $r$.   Our assumption says that there exists $\pp\in \Ass(M)$ such that $\grade(Q, M)=\cd(Q, S/\pp)$. Suppose $r=1$. The exact sequence $0 \to S/\pp \to M \to U\to 0$ yields the exact sequence $0\to H^0_{Q}(U) \to  H^1_{Q}(S/\pp) \to H^1_{Q}(M) \to  H^1_{Q}(U) \to 0$. This is a known fact that for every finitely generated $S$-module $N$, the local cohomology module $H^{\cd(Q, N)}_Q(N)$ is not finitely generated. Now suppose $H^1_{Q}(M)$ is finitely generated. Then the exact sequence $0\to H^0_{Q}(U) \to  H^1_{Q}(S/\pp) \to K \to 0$ where $K$ is a submodule of $H^1_{Q}(M)$ provides $H^1_{Q}(S/\pp)$ is finitely generated, a contradiction. Therefore, $H^1_{Q}(M)$ is not finitely generated. Now suppose $r\geq 2$ and that the
statement holds for all modules $N$ which has maximal depth with respect to $Q$  and $\grade(Q, N)<r$.  We want to
prove it for $M$ which has maximal depth with respect
to $Q$ with $\grade(Q, M)=r$. By Lemma \ref{regular}, there exists a
bihomogeneous $M$-regular element $y\in Q$ of degree $(0,1)$ such
that $M/yM$ has maximal depth with respect to $Q$. In fact, by (\ref{homogen}) we have,
\[
\grade(Q, M/yM) = \cd(Q, S/\qq) \quad\text{for some} \quad  \qq \in \Ass(M/yM).
\]
The exact sequence $0 \to M \stackrel y\to M \to M/yM \to 0$ yields the exact sequence $0\to H^{r-1}_{Q}(M/yM) \to  H^r_{Q}(M) \stackrel y\to H^r_{Q}(M) $. Our induction hypothesis provides $H^{r-1}_{Q}(M/yM)$ is not finitely generated. Therefore, $H^r_{Q}(M)$ is not finitely generated too, as desired.
\end{proof}

Let  $M$ a finitely generated bigraded $S$-module. We say $M$ is {\em generalized Cohen--Macaulay with respect to $Q$} if the local cohomology module $H^i_{Q}(M)$ is finitely generated for all $i<\cd(Q, M)$. As a consequence, we have the following
\begin{Corollary}
Suppose $M$ is generalized Cohen--Macaulay with respect to $Q$  with $\grade(Q, M)>0$ and $|K|=\infty$. Then the following statements are equivalent:
\begin{itemize}
\item[{(a)}]  $M$ has maximal depth with respect to $Q$,
\item[{(b)}]  $M$ is sequentially Cohen--Macaulay with respect to $Q$,
\item[{(c)}]  $M$ is Cohen--Macaulay with respect to $Q$.
\end{itemize}
\end{Corollary}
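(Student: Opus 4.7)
The plan is to prove the cycle of implications (c) $\Rightarrow$ (b) $\Rightarrow$ (a) $\Rightarrow$ (c). The first two implications are essentially free from earlier results, while the last implication is the substantive one and is the direct payoff of Theorem~\ref{gradenotfg}.

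For (c) $\Rightarrow$ (b), I would simply observe that the trivial filtration $0 \varsubsetneq M$ is itself a Cohen--Macaulay filtration with respect to $Q$ when $M$ is Cohen--Macaulay with respect to $Q$, so $M$ is sequentially Cohen--Macaulay with respect to $Q$. For (b) $\Rightarrow$ (a), I would cite Proposition~\ref{mdepth} directly, since it tells us that any module that is sequentially Cohen--Macaulay with respect to $Q$ has maximal depth with respect to $Q$.

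The main step is (a) $\Rightarrow$ (c). Set $r = \grade(Q, M) > 0$. Since $M$ has maximal depth with respect to $Q$ and $|K| = \infty$, Theorem~\ref{gradenotfg} applies and yields that $H^{r}_{Q}(M)$ is not finitely generated. On the other hand, the generalized Cohen--Macaulay hypothesis with respect to $Q$ says that $H^{i}_{Q}(M)$ is finitely generated for all $i < \cd(Q, M)$. Combining these, $r$ cannot satisfy $r < \cd(Q, M)$, so $r \geq \cd(Q, M)$. Since the reverse inequality $\grade(Q, M) \leq \cd(Q, M)$ always holds, we obtain $\grade(Q, M) = \cd(Q, M)$, which is exactly the definition of $M$ being Cohen--Macaulay with respect to $Q$.

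There is no real obstacle here; the work was done in proving Theorem~\ref{gradenotfg} and Proposition~\ref{mdepth}. The only thing to be careful about is to make sure the hypothesis $\grade(Q, M) > 0$ is indeed needed when invoking Theorem~\ref{gradenotfg}, which it is, and this is exactly the assumption in the statement. No further assumption on $K$ beyond $|K| = \infty$ is needed, since that is the hypothesis of Theorem~\ref{gradenotfg}.
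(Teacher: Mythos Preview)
Your proof is correct and follows essentially the same route as the paper: both argue (c) $\Rightarrow$ (b) $\Rightarrow$ (a) trivially via Proposition~\ref{mdepth}, and for (a) $\Rightarrow$ (c) both use Theorem~\ref{gradenotfg} to conclude that $H^{\grade(Q,M)}_Q(M)$ is not finitely generated, forcing $\grade(Q,M) = \cd(Q,M)$ against the generalized Cohen--Macaulay hypothesis. The only cosmetic difference is that you phrase the last step directly while the paper phrases it as a proof by contradiction.
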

\begin{proof}
The implication $(c)\Rightarrow (b)$ is obvious. The implication  $(b)\Rightarrow (a)$ follows from Proposition \ref{mdepth}. For $(a)\Rightarrow (c)$, we need to show $\grade(Q, M)=\cd(Q, M)$. Suppose $\grade(Q, M)<\cd(Q, M)$. As $M$ is generalized Cohen--Macaulay with respect to $Q$, we have that $H^{\grade(Q, M)}_{Q}(M)$ is finitely generated. This contradicts with Theorem \ref{gradenotfg}. Therefore, $\grade(Q, M)=\cd(Q, M)$, as desired.
\end{proof}

\begin{Example}{\em
Let $S=K[x_1,x_2, y_1, y_2, y_3, y_4]$ be the standard bigraded polynomial
ring and set  $Q=(y_1, y_2, y_3, y_4)$. We set  $R=S/(\pp_1\cap\pp_2)$ where $\pp_1=(x_1,y_1, y_2)$ and $\pp_2=(x_2, y_3, y_4)$. One has $\cd(Q, R)=2$.
   The exact sequence
  $0\rightarrow R \rightarrow S/\pp_1 \oplus S/\pp_2 \rightarrow S/{\mm}\rightarrow 0$
  yields $H^0_{Q}(R)=0$ and $H^1_{Q}(R)\iso H^0_{Q}(S/\mm)\iso S/\mm$. Here $\mm$ is the unique maximal ideal of $S$. Hence $H^1_{Q}(R)$ is finitely generated. Therefore, $R$ is generalized Cohen--Macaulay with respect to $Q$. As $\grade(Q, R)=1$ and $\mgrade(Q, R)=2$, the ring $R$ has no maximal depth with respect to $Q$. }
\end{Example}

\section{Monomial ideal, tensor product and direct sum }

In the following, we discuss the maximal depth property of monomial ideals.
\begin{Proposition}
\label{cm}
Let $I\subset S=K[x_1, \dots, x_m, y_1, \dots, y_n]$ be a monomial ideal. The following statements hold:
\begin{itemize}
		\item[(a)]  If the maximal height of an associated prime of $I$ in $Q$ is $d$, then $\mgrade(Q, S/I)=n-d$.
        \item[(b)]   If $S/I$ is Cohen--Macaulay, then $S/I$ has maximal depth with respect to $P$ and $Q$.
  \end{itemize}
\end{Proposition}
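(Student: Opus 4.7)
The plan is to reduce everything to counting $y$-variables in the (monomial) associated primes of $I$ and then feed those counts into the formulas in Fact \ref{cd}.

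First, I would recall that every associated prime of a monomial ideal is itself a monomial prime. So each $\pp\in\Ass(S/I)$ splits as $\pp=\pp_P+\pp_Q$ where $\pp_P$ is generated by some $x_i$'s and $\pp_Q$ by some $y_j$'s. Set $h_Q(\pp):=|\pp_Q|$ and $h_P(\pp):=|\pp_P|$. Since $P+\pp=P+\pp_Q$, Fact \ref{cd}(b) gives
\[
\cd(Q,S/\pp)=\dim S/(P+\pp)=n-h_Q(\pp).
\]
Taking the minimum over $\pp\in\Ass(S/I)$ yields
\[
\mgrade(Q,S/I)=n-\max_{\pp\in\Ass(S/I)}h_Q(\pp)=n-d,
\]
which is part (a).

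For part (b), I would exploit that a Cohen--Macaulay quotient has no embedded primes and all associated primes share a common height $h=m+n-\dim S/I$. Hence for every $\pp\in\Ass(S/I)$,
\[
h_P(\pp)+h_Q(\pp)=h,\qquad\text{so}\qquad \min_\pp h_P(\pp)=h-\max_\pp h_Q(\pp)=h-d.
\]
Next I would compute $\cd(P,S/I)$. By Fact \ref{cd}(b), $\cd(P,S/I)=\dim S/(Q+I)$. Passing to radicals one writes $\sqrt{Q+I}=\bigcap_{\pp\in\Min(I)}(Q+\pp)$, and for each monomial prime $\pp$ the ideal $Q+\pp=(\pp_P)+Q$ is again a monomial prime with $\dim S/(Q+\pp)=m-h_P(\pp)$. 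Consequently
\[
\cd(P,S/I)=\max_{\pp\in\Min(I)}(m-h_P(\pp))=m-(h-d).
\]

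Finally, since $S/I$ is Cohen--Macaulay, Fact \ref{cd}(c) gives
\[
\grade(Q,S/I)=\dim S/I-\cd(P,S/I)=(m+n-h)-\bigl(m-(h-d)\bigr)=n-d,
\]
which together with part (a) shows $\grade(Q,S/I)=\mgrade(Q,S/I)$, i.e.\ $S/I$ has maximal depth with respect to $Q$. The statement for $P$ follows by interchanging the roles of the $x_i$'s and $y_j$'s.

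I do not anticipate a real obstacle here: the combinatorics of monomial primes is transparent, and the key input is the standard fact that Cohen--Macaulayness forces $h_P(\pp)+h_Q(\pp)$ to be constant on $\Ass(S/I)$. The one place to be slightly careful is the identification of the minimal primes of $Q+I$ with $\{Q+\pp:\pp\in\Min(I)\}$, which is routine for monomial ideals via radicals but should be stated explicitly.
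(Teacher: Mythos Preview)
Your proof is correct and follows essentially the same route as the paper: split each monomial associated prime into its $x$- and $y$-parts, use Fact~\ref{cd}(b) to read off $\cd(Q,S/\pp)=n-h_Q(\pp)$ for part~(a), and for part~(b) invoke unmixedness from Cohen--Macaulayness together with Fact~\ref{cd}(c) to match $\grade(Q,S/I)$ with $n-d$. The paper phrases the same computation through an irredundant irreducible decomposition, grouping the primary components by the height of their $y$-part, but the underlying argument is identical to yours.
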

\begin{proof}
 (a): Let  $I=\bigcap_{i=1}^r\qq_i$ be an irredundant irreducible decomposition of $I$ where $\qq_i$ are $\pp_i$-primary monomial ideals, see \cite{HH}. We may write $\qq_i=\qq_i^x+\qq_i^y$ where $\qq_i^x=(x_{i_1}^{\alpha_1}, \dots, x_{i_k}^{\alpha_k})$ and $\qq_i^y=(y_{i_1}^{\beta_1}\dots, y_{i_s}^{\beta_s})$ are the monomial ideals in $K[x]$ and $K[y]$, respectively. We set $\sqrt{\qq_i}=\pp_i=\pp_i^x+\pp_i^y$ for all $i$ where $\pp_i^x=\sqrt{\qq_i^x}$ and $\pp_i^y=\sqrt{\qq_i^y}$.
The ideal $I$ has the irredundant irreducible decomposition
\[
I = (\qq_1\cap\dots \cap \qq_{a_1})\cap \dots \cap (\qq_{a_{r-1}+1} \cap \dots \cap \qq_{a_t})
\]
where
\[
\height \pp_{a_{i-1}+1}^y= \dots =\height \pp_{a_i}^y = d_i^y \quad \text{for }\quad i \in \{ 1,\dots, t\};
\]
assuming $a_0 = 0$ and $d_1^y < d_2^y < \dots < d_t^y$. Observe that
 \begin{eqnarray*}
\mgrade(Q, S/I) & = &  \min\{\cd(Q, S/\pp): \pp \in \Ass(S/I)\} \\
              & = & \min\{\dim S/(P+\pp): \pp \in \Ass(S/I)\}\\
              & = & \min\{\dim S/(P+\pp^y): \pp \in \Ass(S/I)\}\\
               & = & \min\{\dim K[y]/\pp^y: \pp \in \Ass(S/I)\}\\
               &=& n-d_t^y.
\end{eqnarray*}
Fact \ref{cd}(b) provides the second step in this sequence and the remaining steps are standard.

(b): We show that $S/I$ has maximal depth with respect to $Q$. The argument for $P$ is similar.
Since $S/I$ is Cohen--Macaulay, it follows that $d^x_t < \dots <d^x_2< d^x_1$
where
\[
\height \pp_{a_{i-1}+1}^x= \dots =\height \pp_{a_i}^x = d^x_i \quad \text{for }\quad i \in \{ 1,\dots, t\};
\]
and $d_i^x+d_{i}^y=\height \pp_i$.
Observe that
\begin{eqnarray*}
 \grade(Q, S/I)&=& \dim S/I-\cd(P, S/I)\\
               &=& m+n-(d^x_t+d^y_t)-(m-d^x_t)\\
               &=& n-d^y_t\\
               &=& \mgrade(Q, S/I).
 \end{eqnarray*}
 By Fact \ref{cd}(c) explains the first step and the forth step follows from Part(a). The remaining steps are obvious.
\end{proof}

\begin{Remark}
\label{HP}{\em
The following example shows that Proposition \ref{cm}(b) is no longer true if $I$ is not a monomial ideal. Consider the hypersurface ring $R=K[x_1, x_2, y_1, y_2]/(f)$ where $f=x_1y_1+x_2y_2$. The ring $R$ is Cohen--Macaulay of dimension $3$. By Fact \ref{cd}(c) we have $\grade(Q, R)=\dim R-\cd(P, R)=3-2=1$. As $\Ass(R)=\{(f)\}$, then $\mgrade(Q, R)=\cd(Q, R)=2$. Thus $R$ has no maximal depth with respect to $Q$.

The converse Proposition \ref{cm}(b) does not hold in general.
We set $R=S/(\pp_1\cap\pp_2)$ where $S=K[x_1,x_2, y_1, y_2]$, $\pp_1=(x_1,y_1)$, $\pp_2=(x_2, y_2)$ and $Q=(y_1, y_2)$.  One has $\grade(Q, R)=\mgrade(Q, R)=1$ and $\grade(P, R)=\mgrade(P, R)=1$. Thus $R$ has maximal depth with respect to $P$ and $Q$. The ring $R$ is not Cohen--Macaulay. In fact, $\dim R=2$ and $\depth R =1$.

Note also that the ring $R$ has maximal depth with respect to $P$ and $Q$, but not maximal depth with respect to $P+Q=\mm$.
 }
\end{Remark}
In the following, we show that the maximal depth property with respect to $Q$ is preserved under tensor product and direct sum.
We first recall the following fact.
We set $K[x]=K[x_1, \dots, x_m]$ and $K[y]=K[y_1, \dots, y_n]$.
\begin{Fact}	
\label{ass}{\em Let $K$ be an algebraically closed field. Let $L$ and $N$ be two non-zero finitely generated
graded modules over $K[x]$ and $K[y]$, respectively.  We set $M=L\tensor_KN$ and consider $M$ as an $S$-module. Then
\[
\Ass_S(M)=\{ \pp_1+\pp_2: \pp_1 \in \Ass_{K[x]}(L) \quad\text{and} \quad \pp_2\in \Ass_{K[y]}(N)\},
\]
see \cite[Corollary 2.8]{HNTT}.}
\end{Fact}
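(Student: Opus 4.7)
The plan is to prove the equality of the two sets of associated primes by establishing both inclusions separately, with the algebraic closedness of $K$ being indispensable throughout.

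For the inclusion $\supseteq$, I would start with embeddings $K[x]/\pp_1 \hookrightarrow L$ and $K[y]/\pp_2 \hookrightarrow N$ witnessing $\pp_1 \in \Ass_{K[x]}(L)$ and $\pp_2 \in \Ass_{K[y]}(N)$. Since $K$ is a field, tensoring over $K$ is exact, so chaining tensored inclusions yields an $S$-linear embedding $K[x]/\pp_1 \tensor_K K[y]/\pp_2 \hookrightarrow L \tensor_K N = M$. After identifying $\pp_1$ and $\pp_2$ with their extensions in $S$, the source is isomorphic as an $S$-module to $S/(\pp_1+\pp_2)$. The key algebraic input is the classical fact that over an algebraically closed field, the tensor product of two integral domains is again a domain; thus $\pp_1+\pp_2$ is prime in $S$, its unique associated prime is itself, and the embedding forces $\pp_1+\pp_2 \in \Ass_S(M)$.

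For $\subseteq$, let $\pp \in \Ass_S(M)$ be witnessed by $\Ann_S(\xi)=\pp$ for some $\xi\in M$. Set $\pp_1 = \pp \cap K[x]$ and $\pp_2 = \pp \cap K[y]$; by contraction these are primes of the respective subrings. I would then aim at two sub-claims: (i) $\pp = \pp_1+\pp_2$; (ii) $\pp_1 \in \Ass_{K[x]}(L)$ and $\pp_2 \in \Ass_{K[y]}(N)$. For (i), the inclusion $\pp_1+\pp_2 \subseteq \pp$ is formal; for the reverse, one uses that $S/(\pp_1+\pp_2)$ is already a domain (again by algebraic closedness), together with a localization argument at $\pp$ applied to the cyclic submodule $S\xi$, to rule out that $\pp$ strictly contains $\pp_1+\pp_2$. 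For (ii), writing $\xi = \sum_i l_i \tensor n_i$ as a finite sum and reducing modulo $\pp_1$ on the left-hand tensor factor and modulo $\pp_2$ on the right, one extracts an element of $L$ whose $K[x]$-annihilator is exactly $\pp_1$, and symmetrically for $N$.

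The principal obstacle is the decomposition $\pp = \pp_1+\pp_2$ in part (i): without $K$ being algebraically closed this can fail, as illustrated by $\CC \tensor_\RR \CC \iso \CC \times \CC$, whose two minimal primes arise from a single pair of trivial primes of $\CC$. Once that decomposition is in hand, tracking $\xi$ through the tensor structure to produce witnesses in $L$ and $N$ is delicate bookkeeping but essentially mechanical. A more geometric alternative is to view $\Spec S = \Spec K[x] \times_K \Spec K[y]$ and realize associated primes of $M$ as irreducible components of the scheme-theoretic support; the claim then becomes the familiar statement that irreducible components of a product over an algebraically closed field are products of irreducible components of the factors.
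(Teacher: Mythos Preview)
The paper does not prove this statement; it is recorded as a Fact with a citation to \cite[Corollary~2.8]{HNTT}, so there is no in-paper argument to compare against.

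Your inclusion $\supseteq$ is correct, and you have isolated the essential algebraic input: over an algebraically closed field the tensor product of domains is a domain, so $\pp_1+\pp_2$ is prime and the tensored embedding $S/(\pp_1+\pp_2)\hookrightarrow M$ exhibits it as an associated prime.

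For $\subseteq$, your sub-claims (i) and (ii) are the right targets, but the sketch for (i) has a genuine gap. Knowing only that $S/(\pp_1+\pp_2)$ is a domain and that $S\xi\cong S/\pp$ tells you that $\pp/(\pp_1+\pp_2)$ is some prime of that domain; the ``localization argument at $\pp$'' you allude to does not, by itself, force this prime to be zero, and at that point you have not yet invoked the tensor structure of $M$ in any way that would. Making this step rigorous, and then extracting witnesses for $\pp_1\in\Ass_{K[x]}(L)$ and $\pp_2\in\Ass_{K[y]}(N)$ from a tensor expression for $\xi$, is more delicate than ``mechanical bookkeeping'' suggests.

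A cleaner route, and presumably the one in the cited reference, sidesteps this by applying the standard result on associated primes under flat extensions twice: for $F$ a $B$-module flat over $A$ one has $\Ass_B(E\tensor_A F)=\bigcup_{\pp\in\Ass_A(E)}\Ass_B(F/\pp F)$. First take $A=K[x]$, $E=L$, $F=K[x]\tensor_K N$ (free over $K[x]$, with $E\tensor_A F=M$); then take $A=K[y]$, $E=N$, $F=(K[x]/\pp_1)\tensor_K K[y]$. This reduces the whole computation to $\Ass_S\big((K[x]/\pp_1)\tensor_K(K[y]/\pp_2)\big)=\{\pp_1+\pp_2\}$, which is exactly the domain fact you already identified, and yields both inclusions simultaneously without any element-chasing.
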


\begin{Proposition}
\label{joint}
Continue with the notation and assumptions as above. Then,  $M$ has maximal depth with respect to $Q$ if and only if  $N$ has maximal depth.
\end{Proposition}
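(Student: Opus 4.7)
The plan is to reduce both $\grade(Q, M)$ and $\mgrade(Q, M)$ to numerical invariants of $N$ alone, after which the desired equivalence becomes the definition of maximal depth for the $K[y]$-module $N$.

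First, to compute $\mgrade(Q, M)$, I would invoke Fact~\ref{ass}: every $\pp \in \Ass_S(M)$ decomposes as $\pp = \pp_1 + \pp_2$ with $\pp_1 \in \Ass_{K[x]}(L)$ and $\pp_2 \in \Ass_{K[y]}(N)$. Since $\pp_1 \subseteq P$, one has $P + \pp = P + \pp_2$, so Fact~\ref{cd}(b) gives
\[
\cd(Q, S/\pp) = \dim S/(P + \pp_2) = \dim K[y]/\pp_2.
\]
Minimising over all such $\pp$ yields $\mgrade(Q, M) = \min\{\dim K[y]/\pp_2 : \pp_2 \in \Ass_{K[y]}(N)\} = \mdepth N$, since the factor $\pp_1$ plays no role.

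Next, to compute $\grade(Q, M)$, the key point is that $L$ is a nonzero $K$-vector space: choosing a $K$-basis of $L$ gives an isomorphism $M \iso \Dirsum_{i \in I} N$ of $K[y]$-modules on which $Q$ acts solely through the right factor. Consequently an element (or sequence) of $Q$ is $M$-regular if and only if it is $N$-regular, so
\[
\grade_S(Q, M) = \grade_{K[y]}(Q, N) = \depth N,
\]
the last equality because $Q$ is the graded maximal ideal of $K[y]$.

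Combining the two computations, $M$ has maximal depth with respect to $Q$ if and only if $\depth N = \mdepth N$, i.e., if and only if $N$ has maximal depth. The only nontrivial bookkeeping is the reduction $\cd(Q, S/(\pp_1 + \pp_2)) = \dim K[y]/\pp_2$, which I expect to be the main obstacle; once it is settled via Fact~\ref{cd}(b), both identities $\mgrade(Q, M) = \mdepth N$ and $\grade(Q, M) = \depth N$ follow routinely and the proposition is immediate.
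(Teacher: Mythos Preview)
Your proposal is correct and follows essentially the same route as the paper: both reduce the statement to the two identities $\grade(Q,M)=\depth_{K[y]} N$ and $\mgrade(Q,M)=\mdepth_{K[y]} N$, with the second established via Fact~\ref{ass} and Fact~\ref{cd}(b) exactly as you do. The paper simply asserts the first identity without argument, whereas you justify it via the $K[y]$-module decomposition $M\iso\Dirsum N$; just be aware that your phrase ``an element of $Q$ is $M$-regular iff it is $N$-regular'' literally applies only to elements of $Q\cap K[y]$, so the passage from $\grade_S(Q,M)$ to $\grade_{K[y]}((y_1,\dots,y_n),N)$ is cleanest if you phrase it through the Koszul (or \v{C}ech) complex on $y_1,\dots,y_n$, whose differentials involve only the $K[y]$-action.
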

\begin{proof}
Note that $\grade(Q, M)=\depth_{K[y]} N$.
 Let $\pp \in \Ass(M)$. By Fact \ref{ass},  there exist $\pp_1\in \Ass_{K[x]}(L)$ and $\pp_2\in \Ass_{K[y]}(N)$ such that $\pp=\pp_1+\pp_2$. Observe that
 \begin{eqnarray*}
\mgrade(Q, M) & = &  \min\{\cd(Q, S/\pp): \pp \in \Ass(M)\} \\
              & = & \min\{\dim S/(P+\pp): \pp \in \Ass(M)\}\\
              & = & \min\{\dim S/(P+\pp_2): \pp_2 \in \Ass(N)\}\\
               & = & \min\{\dim K[y]/\pp_2): \pp_2 \in \Ass(N)\}\\
               &=& \mdepth_{K[y]} N.
\end{eqnarray*}
Fact \ref{cd}(b) explains the second step in this sequence and the remaining steps are standard. Therefore, the assertion follows.
\end{proof}

\begin{Proposition}
Let $M_1, \dots, M_n$ be finitely generated bigraded $S$-modules.   Then $\Dirsum_{i=1}^n M_i$  has maximal depth with respect to $Q$ if and only if there exists $j$ such that $\grade(Q, M_j)\leq \grade(Q, M_k)$ for all $k$ and $M_j$ has maximal depth with respect to $Q$.
\end{Proposition}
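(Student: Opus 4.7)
The plan is to translate both the hypothesis and the conclusion into numerical statements about the sequences $\grade(Q, M_i)$ and $\mgrade(Q, M_i)$, and then verify the equivalence by a chain of inequalities. Throughout, let $M=\Dirsum_{i=1}^n M_i$.

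The first step is to establish two identities. Since local cohomology commutes with finite direct sums, $H^j_Q(M)=\Dirsum_i H^j_Q(M_i)$, so
\[
\grade(Q, M)=\min_i \grade(Q, M_i).
\]
Similarly, $\Ass(M)=\Union_i \Ass(M_i)$, which gives
\[
\mgrade(Q, M)=\min_i \mgrade(Q, M_i).
\]
Once these two formulas are in place the rest is bookkeeping with the universal inequality $\grade(Q, M_i)\leq \mgrade(Q, M_i)$.

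For the forward direction, assume $\grade(Q, M)=\mgrade(Q, M)$. The key observation is that one should not pick $j$ to minimize $\grade(Q, M_j)$ but rather to minimize $\mgrade(Q, M_j)$: choose $j$ with $\mgrade(Q, M_j)=\min_i \mgrade(Q, M_i)=\mgrade(Q, M)$. Then I would chase the chain
\[
\grade(Q, M_j)\leq \mgrade(Q, M_j)=\mgrade(Q, M)=\grade(Q, M)=\min_i \grade(Q, M_i)\leq \grade(Q, M_j),
\]
which collapses into two equalities: $\grade(Q, M_j)=\mgrade(Q, M_j)$, so that $M_j$ has maximal depth with respect to $Q$, and $\grade(Q, M_j)=\min_i \grade(Q, M_i)$, so that $\grade(Q, M_j)\leq \grade(Q, M_k)$ for all $k$.

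For the backward direction, suppose such an index $j$ exists. Then by the first identity $\grade(Q, M)=\grade(Q, M_j)$, and applying the second identity together with the assumption that $M_j$ has maximal depth gives
\[
\mgrade(Q, M)\leq \mgrade(Q, M_j)=\grade(Q, M_j)=\grade(Q, M)\leq \mgrade(Q, M),
\]
forcing equality throughout, so $M$ has maximal depth with respect to $Q$. The only non-routine point in the argument is the observation in the forward direction that one must choose $j$ to realize the minimum of $\mgrade$ rather than of $\grade$; once that choice is made, the squeeze via the chain of inequalities automatically delivers both required properties simultaneously.
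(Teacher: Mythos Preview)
Your proof is correct and follows essentially the same approach as the paper. Both arguments rest on the two identities $\grade(Q,\bigoplus_i M_i)=\min_i\grade(Q,M_i)$ and $\Ass(\bigoplus_i M_i)=\bigcup_i\Ass(M_i)$; the paper carries out the forward direction by tracking a specific witnessing associated prime $\pp$ and showing it must lie in some $\Ass(M_j)$ with $\grade(Q,M_j)$ minimal, while you package the same idea more cleanly via the derived identity $\mgrade(Q,\bigoplus_i M_i)=\min_i\mgrade(Q,M_i)$ and a single squeeze chain.
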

\begin{proof}
Suppose  $\Dirsum_{i=1}^n M_i$ has maximal depth with respect to $Q$. Thus there exists an associated prime $\pp$ of $\Dirsum_{i=1}^n M_i$ such that $\grade \big(Q, \Dirsum_{i=1}^n M_i\big)=\cd(Q, S/\pp)$. Note that $\grade\big(Q, \Dirsum_{i=1}^n M_i\big)=\grade(Q, M_s)$ where $\grade(Q, M_s) \leq  \grade(Q, M_k)$ for all $k$. Since $\pp \in \Ass\big(\Dirsum_{i=1}^n M_i\big)=\bigcup_{i=1}^n\Ass(M_i)$, it follows that $\pp \in \Ass(M_s)$ where $\grade(Q, M_s) \leq \grade(Q, M_k)$ for all $k$. Indeed, otherwise  $\pp \in \Ass(M_l)$ for some $l$ and there exists $h$ such that  $\grade(Q, M_h)<\grade(Q, M_l)$. Hence
\begin{eqnarray*}
 \grade(Q, M_s)& \leq & \grade(Q, M_h)\\
                & < &  \grade(Q, M_l) \\
                & \leq &  \cd(Q, S/\pp)\\
                & =& \grade(Q, M_s),
\end{eqnarray*}
 a contradiction. The third inequality follows from (\ref{mgrade}).  Therefore, the conclusion follows. The other implications are obvious.
\end{proof}

\section{Hypersurface rings with maximal depth}
In the following, we classify all hypersurface rings that have maximal depth with respect to $Q$.
Let $f\in S$ be a bihomogeneous element of degree $(a, b)$
and consider the hypersurface ring $R=S/fS$. We may write
\[
f=\sum_{{| \alpha|=a}\atop {| \beta|=b}}c_{\alpha \beta }x^\alpha y^\beta  \quad\text{where}\quad c_{\alpha \beta} \in K.
\]
Note that $R$ is a Cohen--Macaulay $S$-module of dimension $m+n-1.$

\begin{Theorem}
\label{hypersurface}Let $f\in S$ be a bihomogeneous  element of
degree $(a, b)$ and  $R=S/fS$ be the hypersurface ring. Then $R$ has maximal depth with respect to $Q$ if and only if one of the following conditions holds true
 \begin{itemize}

 \item[{(a)}] $f=h_1h_2h_3$ where $\deg h_1=(\alpha_1, 0)$ with $\alpha_1 > 0$, $\deg h_2=(\alpha_2, \beta_1)$ with $\alpha_2, \beta_1 > 0$, $\deg h_3=(0, \beta_2)$ with $\beta_2>0$ and  $\alpha_1+\alpha_2=a$ and $\beta_1+\beta_2=b$;
 \item[{(b)}]$f=h_2h_3$ where $\deg h_2=(\alpha_2, \beta_1)$ with $a=\alpha_2, \beta_1 > 0$, $\deg h_3=(0, \beta_2)$ with $\beta_2> 0$ and $\beta_1+\beta_2=b$;
 \item[{(c)}] $f=h_1h_3$ where  $\deg h_1=(a, 0)$ with $a \geq 0$ and $\deg h_3=(0, b)$ with $b\geq 0$.
    \end{itemize}
\end{Theorem}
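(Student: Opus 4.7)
The plan is to compute $\grade(Q, R)$ and $\mgrade(Q, R)$ separately in terms of the bidegree $(a,b)$ and the irreducible factorization of $f$, and then compare them. Since $R$ is Cohen--Macaulay of dimension $m+n-1$, Fact~\ref{cd}(c) gives $\grade(Q, R) = m+n-1-\cd(P, R)$, and Fact~\ref{cd}(b) rewrites $\cd(P, R) = \dim S/(Q+fS)$. The image of $f$ in $S/Q \iso K[x_1,\dots,x_m]$ vanishes exactly when $b > 0$ (since each monomial of $f$ then carries some $y_j$) and equals $f$ itself when $b = 0$, giving $\grade(Q, R) = n-1$ when $b > 0$ and $\grade(Q, R) = n$ when $b = 0$.

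For the minimal side, since $S$ is a UFD, $\Ass(R) = \{(g) : g \text{ is an irreducible factor of } f\}$. For each such $g$ of bidegree $(\alpha, \beta)$, Fact~\ref{cd}(b) yields $\cd(Q, S/(g)) = \dim S/(P+(g))$, which equals $n$ when $\alpha > 0$ (since then $g \in P$, so $S/(P+(g)) \iso S/P$) and $n-1$ when $\alpha = 0$ (so $g \in K[y_1,\dots,y_n]$ is a nonzero nonunit and cuts the dimension by one). Consequently $\mgrade(Q, R) = n-1$ precisely when $f$ admits an irreducible factor lying in $K[y_1,\dots,y_n]$, and $\mgrade(Q, R) = n$ otherwise.

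Combining the two calculations, $R$ has maximal depth with respect to $Q$ if and only if either $b = 0$, or $b > 0$ and $f$ has a pure $y$ irreducible factor. Grouping the irreducible factors of $f$ by type yields a factorization $f = h_1 h_2 h_3$ (unique up to units) with $h_1 \in K[x_1,\dots,x_m]$, $h_3 \in K[y_1,\dots,y_n]$, and $h_2$ a product of irreducibles each divisible by some $x_i$ and some $y_j$. Case~(a) of the theorem is the configuration where all three factors are nonunits; case~(b) is the configuration where $h_1$ is a unit while $h_2,h_3$ are nonunits; and case~(c) is the configuration where $h_2$ is a unit, which, allowing $a = 0$ or $b = 0$, covers all subcases in which $f$ has no mixed irreducible factor. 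The one remaining configuration---$h_2$ nonunit with $h_3$ a unit---corresponds to $b > 0$ with no pure $y$ factor, and there $\grade(Q, R) = n-1 < n = \mgrade(Q, R)$, so maximal depth fails. The main task is precisely this bookkeeping, matching the three stated factorization patterns with the equality $\grade(Q,R) = \mgrade(Q,R)$; the cohomological inputs themselves are immediate from the preliminaries.
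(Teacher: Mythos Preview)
Your proposal is correct and follows essentially the same strategy as the paper: factor $f$ into pure-$x$, mixed, and pure-$y$ parts $h_1h_2h_3$, compute $\grade(Q,R)$ via Fact~\ref{cd}(c) and $\mgrade(Q,R)$ from the associated primes $(f_i)$ using Fact~\ref{cd}(b), and compare. The only organizational difference is that the paper runs through five explicit subcases on $(\alpha_1,\alpha_2,\beta_1,\beta_2)$ and recomputes both quantities in each, whereas you compute $\grade(Q,R)\in\{n-1,n\}$ and $\mgrade(Q,R)\in\{n-1,n\}$ once in general (according to whether $b>0$ and whether a pure-$y$ irreducible factor exists) and then read off the case split; this is a cleaner packaging of the same argument and also transparently covers the degenerate subcases $a=0$ or $b=0$ in condition~(c).
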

\begin{proof}
Let $f=\prod_{i=1}^r f_i$ be the unique
factorization of $f$ into bihomogeneous irreducible factors $f_i$.
We may write $h_1=\prod_{i=1}^{s-1} f_i$ where $\deg f_i=(a_i, 0)$ with $a_i\geq 0$ for $i=1, \dots, s-1$, $h_2=\prod_{i=s}^{t} f_i$ where $\deg f_i=(a_i ,b_i)$ with $a_i, b_i > 0$  for $i=s,s+1, \dots, t$ and $h_3=\prod_{i=t+1}^{r} f_i$ where $\deg f_i=(0 ,b_i)$  with $b_i\geq 0$ for $i=t+1, \dots, r$. Note that
$\sum_{i=1}^r a_i=a$
 and $\sum_{i=1}^r b_i=b$. We set $\sum_{i=1}^{s-1} a_i=\alpha_1$, $\sum_{i=s}^{t} a_i=\alpha_2$, $\sum_{i=s}^t b_i=\beta_1$ and $\sum_{i=t+1}^r b_i=\beta_2$ .  Thus $\alpha_1+\alpha_2=a$ and $\beta_1+\beta_2=b$.  We consider several cases:

Case 1: Suppose that  $\alpha_1, \alpha_2, \beta_1, \beta_2 > 0$. Then
\begin{eqnarray*}
\cd(P, R) &=& \max\{\cd(P, S/\pp): \pp \in \Ass(R)\}\\
          &=& \max \{ \dim S/(Q+(f_i)): i=1, \dots, r\}\\
          &=& \max \{ \dim S/(Q+(f_i)): i=s, \dots, r\}\\
          &=& \dim S/Q\\
          &=& m.
\end{eqnarray*}
Fact \ref{cd}(b) explains the second step. Hence by Fact \ref{cd}(c) we have
\[
\grade(Q, R)=\dim R-\cd(P, R)=m+n-1-m=n-1.
\]
On the other hand,
\begin{eqnarray*}
\mgrade(Q, R) &=& \min\{\cd(Q, S/\pp): \pp \in \Ass(R)\}\\
          &=& \min \{ \dim S/(P+(f_i)): i=1, \dots, r\}\\
          &=& \min \{ \dim S/(P+(f_i)): i=t, \dots, r\}\\
          &=& n-1.
\end{eqnarray*}
Therefore, $R$ has maximal depth with respect to $Q$.

Case 2: Suppose that $\alpha_1=0$ and  $ \beta_1, \beta_2, \alpha_2=a > 0$. Then
\begin{eqnarray*}
\cd(P, R) &=& \max\{\cd(P, S/\pp): \pp \in \Ass(R)\}\\
          &=& \max \{ \dim S/(Q+(f_i)): i=s, \dots, r\}\\
          &=& \dim S/Q\\
          &=& m.
\end{eqnarray*}
Fact \ref{cd}(c) provides
\[
\grade(Q, R)=\dim R-\cd(P, R)=m+n-1-m=n-1.
\]
On the other hand,
\begin{eqnarray*}
\mgrade(Q, R) &=& \min\{\cd(Q, S/\pp): \pp \in \Ass(R)\}\\
          &=& \min \{ \dim S/(P+(f_i)): i=s, \dots, r\}\\
          &=& \min \{ \dim S/(P+(f_i)): i=t, \dots, r\}\\
          &=& n-1.
\end{eqnarray*}
Thus, $R$ has maximal depth with respect to $Q$.

Case 3: If  $\alpha_2=\beta_1=0$ and $\alpha_1, \beta_2>0$, then  $\grade(Q, R)=\mgrade(Q, R)=n-1$, and so $R$ has maximal depth with respect to $Q$, too.

Case 4: Suppose that $\beta_2=0$ and $\alpha_1, \alpha_2, \beta_1=b > 0$. A similar arguments as above shows that $n-1=\grade(Q, R) \neq \mgrade(Q, R)=n$. Thus , $R$ has no maximal depth with respect to $Q$ in this case.

Case 5: Suppose  $\alpha_1=\beta_2=0$ and $\alpha_2=a>0, \beta_1=b>0$. Then
\[
\grade(Q, R)=\dim R-\cd(P, R)=m+n-1-m=n-1,
\]
 and $\mgrade(Q, R)=n$. Hence $R$ has no maximal depth with respect to $Q$.
Now the desired conclusion follows from the above observations.

\end{proof}
\bigskip

\end{document}